\providecommand{\url}[1]{#1}
\renewcommand{\le}{\leqslant}
\renewcommand{\ge}{\geqslant}
\renewcommand{\P}{\operatorname{\mathsf{P}}} 
\newcommand{\E}{\operatorname{\mathsf{E}}}
\newcommand{\cc}{C^2_\mathsf{conv}}
\newcommand{\R}{\mathbb{R}}
\newcommand{\al}{\alpha}
\newcommand{\be}{\beta}
\newcommand{\De}{\Delta}
\newcommand{\vp}{\varepsilon}
\newcommand{\vpp}{\boldsymbol{\vp}}
\newcommand{\xii}{\boldsymbol{\xi}}
\newcommand{\all}{\boldsymbol{\al}}
\newcommand{\XX}
{\R^d}
\newtheorem{theoremm}{Theorem}
\newtheorem{lemmaa}
{Lemma}
\newtheorem{cor}
{Corollary}
\theoremstyle{remark}
\begin{document} 

\begin{frontmatter}

\title{On a multi
dimensional spherically invariant extension of the Rademacher--Gaussian comparison}
\runtitle{Spherically invariant extension of the Rademacher--Gaussian comparison}

\begin{aug}
 \author{\fnms{Iosif}  \snm{Pinelis}\corref{}\ead[label=e1]{ipinelis@mtu.edu}}
 \affiliation{Michigan Technological University}
 \address{Department of Mathematical Sciences\\Michigan Technological University\\Hough\-ton, Michigan 49931\\\printead{e1}}
\runauthor{I.\ Pinelis} 
\end{aug} 

\begin{abstract} 
It is shown that
\begin{equation*}
	\mathsf{P}(\|a_1U_1+\dots+a_nU_n\|>u)\le c\,\mathsf{P}(a\|Z_d\|>u)  
\end{equation*} 
for all real $u$, where $U_1,\dots,U_n$ are independent random vectors uniformly distributed on the unit sphere in $\mathbb{R}^d$, $a_1,\dots,a_n$ are any real numbers, $a:=\sqrt{(a_1^2+\dots+a_n^2)/d}$, $Z_d$ is a standard normal random vector in $\mathbb{R}^d$, and $c=2e^3/9=4.46\dots$. 
This constant factor is about $89$ times as small as 
the one in a recent result by Nayar and Tkocz, who proved, 
by a different method, a corresponding conjecture by Oleszkiewicz. 
As an immediate application, a corresponding upper bound on the tail probabilities for the norm of the sum of arbitrary independent spherically invariant random vectors is given.
\end{abstract}

\setattribute{keyword}{AMS}{AMS 2010 subject classifications:}

\begin{keyword}[class=AMS]
\kwd[Primary ]
{60E15}
\kwd[; secondary ]{60G15}
\kwd{60G50}
\end{keyword}




\begin{keyword}
\kwd{probability inequalities}
\kwd{generalized moment comparison}
\kwd{tail comparison}
\kwd{sums of independent random vectors}
\kwd{Gaussian random vectors}
\kwd{uniform distribution on the spheres}
\end{keyword}

\end{frontmatter}





Usually, at the heart of any good limit theorem is at least one good inequality. This should become clear if one recalls the definition of the limit and the fact that a neighborhood of a point in a specific topology is usually defined in terms of inequalities. A limit theorem can be very illuminating. However, it only describes the behavior of a function near a given point (possibly at infinity), whereas a corresponding inequality would cover an entire range. 

Also, the nature of limit theorems is more qualitative, whereas that of inequalities is more quantitative. E.g., a central limit theorem would state that a certain distribution is close to normality; such a statement by itself is qualitative, as it does not specify the degree of closeness under specific conditions. In contrast, a corresponding Berry--Esseen-type inequality can provide such quantitative specifics. 

This is why good inequalities are important. A good inequality would be, not only broadly enough applicable, but also precise enough. Indeed, only such results have a chance to be adequately used in real-world applications. 
Such an understanding of the role of good and, in particular, best possible bounds goes back at least to Chebyshev; cf.\ the theory of Tchebycheff systems \cite{karlin-studden,krein-nudelman} developed to provide optimal solutions to a broad class of such problems. 
These ideas were further developed by a large number of authors, including Bernstein \cite{bernstein}, Bennett \cite{bennett}, and Hoeffding \cite{hoeff-extr,hoeff63}. 
In particular, Bennett \cite{bennett} exerted a considerable effort on comparing various bounds on tail probabilities in various ranges. Quoting Bennett \cite{bennett}:

\begin{quote}
Much work has been carried out on the asymptotic form of the distribution
of such sums [of independent random variables] when the number of component random variables is large and/or
when the component variables have identical distributions. The majority of
this work, while being suitable for the determination of the asymptotic distribution
of sums of random variables, does not provide estimates of the accuracy
of such asymptotic distributions when applied to the summation of finite numbers
of components. [...] Yet, for most practical problems,
precisely this distribution function is required.
\end{quote}

In this note, we shall present an upper bound on a tail probability that is about $89$ times as small as the corresponding bound recently obtained in \cite{nayar-tkocz}. 

To provide a relevant context, let us begin by introducing the class $\cc$ of all even twice differentiable functions $h\colon\R\to\R$ whose second derivative $h''$ is convex. Let $\vp,\vp_1,\dots,\vp_n$ be independent Rademacher random variables (r.v.'s), and let $\xi_1,\dots,\xi_n$ be any independent symmetric r.v.'s with $\E\xi_i^2=1$ for all $i$. 

Take any natural $d$. For any vectors $x$ and $y$ in $\R^d$, let, as usual, $x\cdot y$ denote the standard inner product of $x$ and $y$, and then let $\|x\|:=\sqrt{x\cdot x}$.   

Theorem~2.3 in \cite{T2} states that $\E h\big(\sqrt{\vpp A\vpp^T}\big)\le\E h\big(\sqrt{\xii A\xii^T}\big)$ for any $h\in\cc$ and any nonnegative definite $n\times n$ matrix $A\in\R^{n\times n}$, where $\vpp:=[\vp_1,\dots,\vp_n]$ and $\xii:=[\xi_1,\dots,\xi_n]$. 
This 
can be restated as the following generalized moment comparison: 
\begin{equation}\label{eq:pin-compar}
	\E h(\|\vp_1x_1+\dots+\vp_nx_n\|)\le\E h(\|\xi_1x_1+\dots+\xi_nx_n\|)
\end{equation}
for any $h\in\cc$ and any (nonrandom) vectors $x_1,\dots,x_n$ in $\R^d$; indeed, any nonnegative definite matrix $A\in\R^{n\times n}$ is the Gram matrix of some vectors $x_1,\dots,x_n$ in $\R^d$ for some natural $d$, and then $\|\al_1x_1+\dots+\al_nx_n\|=\sqrt{\all A\all^T}$ for any $\all:=[\al_1,\dots,\al_n]\in\R^{1\times n}$. 
From the comparison \eqref{eq:pin-compar} of generalized moments of the r.v.'s $\|\vp_1x_1+\dots+\vp_nx_n\|$ and $\|\xi_1x_1+\dots+\xi_nx_n\|$, a tail comparison was extracted (\cite[Theorem~2.4]{T2}), an equivalent form of which is the inequality 
\begin{equation}\label{eq:ortho}
	\P(\|\vp_1x_1+\dots+\vp_nx_n\|>u)<c\P(\|Z_r\|>u)
\end{equation}
for all real $u$, where $x_1,\dots,x_n$ are any (nonrandom) vectors in $\R^d$ whose Gram matrix is an orthoprojector of rank $r$, $Z_r$ is a standard normal random vector in $\R^r$, and 
\begin{equation}\label{eq:c=c_3}
	c=c_3:=2e^3/9=4.46\dots. 
\end{equation}
A special case of \eqref{eq:ortho} is the inequality 
\begin{equation}\label{eq:d=1}
	\P(|\vp_1a_1+\dots+\vp_na_n|>u)\le c\P(|Z_1|>u)
\end{equation}
for all real $u$, where $a_1,\dots,a_n$ are any real numbers such that 
\begin{equation*}
	a_1^2+\dots+a_n^2=1. 
\end{equation*}
The quoted results generalize and 
refine results of \cite{eaton1,eaton2}. In turn, they were further developed in \cite{pin98,pin99}.  

A simple inductive argument, which was direct rather than based on a generalized moment comparison, was offered in \cite{BGH}, where \eqref{eq:d=1} was proved with $c\approx12.01$.  
Based in part on that inductive argument in \cite{BGH}, the constant $c$ in \eqref{eq:d=1} was improved to $\approx1.01c_*$ in \cite{pin-towards} and then to $c_*$ in \cite{bent-dzin_publ}, where $c_*:=\P(|\vp_1+\vp_2|\ge2)/\P(|Z_1|\ge\sqrt2)=3.17\dots$, so that $c_*$ is the best possible value of $c$ in \eqref{eq:d=1}. 

In \cite{baern-culver}, another kind of multidimensional generalized moment comparison was obtained. A continuous function $f\colon\R^d\to\R$ is called bisubharmonic if the (Sobolev--Schwartz) distribution $\De^2f$ is a nonnegative Radon measure on $\R^d$, where $\De$ is the Laplace operator on $\R^d$. 
By \cite[Theorem~3]{baern-culver}, for any continuous function $f\colon\R^d\to\R$ one has 
\begin{equation}\label{eq:iff}
\text{$f$ is bisubharmonic if and only if $\E f(y+U\sqrt t)$ is convex in $t\in(0,\infty)$ for each $y\in\R^d$,} 	
\end{equation}
where $U$ is a random vector uniformly distributed on the unit sphere $S^{d-1}$ in $\R^d$. 

Let $U_1,\dots,U_n$ be independent copies of $U$. 
Theorem~1 in \cite{baern-culver} states that 
\begin{equation}\label{eq:BC-compar}
	\E f(a_1U_1+\dots+a_nU_n)\le\E f(b_1U_1+\dots+b_nU_n), 
\end{equation}
where $f$ is a bisubharmonic function and $a_1,\dots,a_n,b_1,\dots,b_n$ are real numbers such that the 
$n$-tuple $(b_1^2,\dots,b_n^2)$ is majorized by $(a_1^2,\dots,a_n^2)$ in the sense of the Schur majorization (see e.g.\ \cite{marsh-ol}). 

One may note that, whereas in \eqref{eq:pin-compar} each of the random summands $\vp_1x_1,\dots,\vp_nx_n,
\xi_1x_1,\dots,\xi_nx_n$ is distributed on a straight line through the origin, each of the random summands  $a_1U_1,\dots,a_nU_n,b_1U_1,\dots,b_nU_n$ in \eqref{eq:BC-compar} is uniformly distributed on a sphere centered at the origin. 

Since the distributions of the random vectors $a_1U_1+\dots+a_nU_n$ and $b_1U_1+\dots+b_nU_n$ are clearly spherically invariant, without loss of generality one may assume that the function $f$ in \eqref{eq:BC-compar} is spherically invariant as well, that is, $f(x)$ depends on $x\in\R^d$ only through $\|x\|$. 
If $f$ is indeed a spherically invariant bisubharmonic function, it then follows from \eqref{eq:BC-compar} and \cite[formulas~(1.2), (1.3)]{baern-culver} that 
\begin{equation}\label{eq:BC-compar,Z}
	\E f(a_1U_1+\dots+a_nU_n)\le\E f(aZ_d), 
\end{equation}
where 
\begin{equation}\label{eq:a}
	a:=\sqrt{(a_1^2+\dots+a_n^2)/d};  
\end{equation}
cf.\ \cite[Corollary~1]{baern-culver}. 

Let $\cc(H)$ denote the class of all spherically invariant twice differentiable functions $f$ from a Hilbert space $H$ to $\R$ whose second derivative $f''$ is convex in the sense that the function $H\ni x\mapsto f''(x;y,y)$ is convex for each $y\in H$, where $f''(x;y,y)$ is the value of the second derivative of the function $\R\ni t\mapsto f(x+ty)$ at $t=0$.  
The class $\cc(H)$ was characterized in \cite{spher}, with some applications. Clearly, $\cc(\R)$ coincides with the class $\cc$ defined in the beginning of this note. 

K.\ Oleszkiewicz conjectured \cite{nayar-tkocz} that 
\begin{equation}\label{eq:KO-compar}
	\P(\|a_1U_1+\dots+a_nU_n\|>u)\le c\P(a\|Z_d\|>u)  
\end{equation}
for some universal constant $c$ and all real $u$, where $a_1,\dots,a_n,a,U_1,\dots,U_n,Z_d$ are as before; clearly, \eqref{eq:KO-compar} is a generalization of \eqref{eq:d=1}. 
This conjecture was proved in \cite{nayar-tkocz} with $c=397$ 
based, in part, on the idea from \cite{BGH}. 

Using inequality (2.6) in \cite{T2}, one can improve the lower bound $1/397$ in \cite[Lemma 1]{nayar-tkocz} to $1/e^2$ and thus improve the constant $c$ in \eqref{eq:KO-compar} from $397$ to $e^2=7.38\dots$. 
Indeed, let, as usual, $\Phi$ denote the standard normal distribution function. 
Then, 
by
inequality (2.6) in \cite{T2},
$g(d):=\P(\|Z_d\|\ge\sqrt{d+2}\,)>1-\Phi\big((\sqrt{d+2}-\sqrt{d-1}\,)\sqrt2\,\big)=:q(d)$,
which latter is clearly increasing in $d$, with $q(4)>1/e^2$, whence
$g(d)>1/e^2$ for $d=4,5,...$, whereas $g(2)=1/e^2<g(3)$. So,
$\P(\|Z_d\|\ge\sqrt{d+2}\,)=g(d)\ge1/e^2$ for $d=2,3,...$.
Similarly, $\P(\|Z_d\|\ge\sqrt d\,)\ge1/e$ for $d=2,3,...$ (but a lower bound on $\P(\|Z_d\|\ge\sqrt d\,)$ is not really needed in the proof of the main result in \cite{nayar-tkocz}).

The aim of this note is to point out that, 
based on the generalized moment comparison \eqref{eq:BC-compar,Z} and results in \cite{T2,pin98}, one can further improve the constant $c$ in \eqref{eq:KO-compar}: 


\begin{theoremm}\label{th:}
Inequality \eqref{eq:KO-compar} holds (for all real $u$) with $c$ as in \eqref{eq:c=c_3}. 
The strict version of \eqref{eq:KO-compar}, again with $c$ as in \eqref{eq:c=c_3}, also holds. 
\end{theoremm}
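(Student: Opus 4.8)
The plan is to combine the Baernstein--Culver generalized moment comparison \eqref{eq:BC-compar,Z} with a Chebyshev-type truncated third-moment argument and then to invoke the moment-to-tail passage of \cite{T2,pin98} that already carries the constant \eqref{eq:c=c_3}. Write $S:=a_1U_1+\dots+a_nU_n$ and $W:=a\|Z_d\|=\|aZ_d\|$, so that the claim is $\P(\|S\|>u)\le c\,\P(W>u)$. For $u\le0$ the bound is trivial (both tails then equal $1$, and $c>1$), so assume $u>0$; note also that $W$ has no atoms, whence $\P(W>u)=\P(W\ge u)$. Since \eqref{eq:BC-compar,Z} is available for \emph{every} spherically invariant bisubharmonic $f$, the whole problem reduces to feeding into it a family of test functions that dominate the indicator of $\{\|x\|\ge u\}$ while reproducing, on the Gaussian side, the cubic-moment quantity whose optimization is known to yield $c_3$.

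First I would, for each $t\in(0,u)$, consider the spherically invariant function $f_t(x):=(\|x\|-t)_+^3$. It dominates the indicator in the sense that $\mathbf{1}\{\|x\|\ge u\}\le f_t(x)/(u-t)^3$ for all $x$, because $\|x\|-t\ge u-t>0$ whenever $\|x\|\ge u$. The key point is that $f_t$ is bisubharmonic on $\R^d$. Writing $f_t(x)=g_t(\|x\|)$ with $g_t(r)=(r-t)_+^3$ and using $\De f=g''+\tfrac{d-1}{r}g'$, a direct computation gives, for $r>t$,
\[
\De^2 f_t=\frac{6(d-1)t^2}{r^3}+\frac{6(d-1)}{r}+\frac{3(d-1)^2}{r}\Big(1-\frac{t^2}{r^2}\Big)>0,
\]
while $\De^2 f_t=0$ for $0<r<t$, and the positive jump of $g_t'''$ across $r=t$ contributes a nonnegative surface measure on the sphere $\{\|x\|=t\}$ (all lower-order derivatives of $g_t$ being continuous there); hence $\De^2 f_t$ is a nonnegative measure. (Equivalently one may verify the convexity criterion \eqref{eq:iff}, or read off bisubharmonicity from the characterization of $\cc(\R^d)$ in \cite{spher}.) Applying \eqref{eq:BC-compar,Z} to $f=f_t$ and combining with the domination gives, for every $t\in(0,u)$,
\[
\P(\|S\|\ge u)\le\frac{\E(\|S\|-t)_+^3}{(u-t)^3}\le\frac{\E(W-t)_+^3}{(u-t)^3}.
\]

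It then remains to optimize over $t$ and to show that
\[
\inf_{0<t<u}\frac{\E(W-t)_+^3}{(u-t)^3}\le c\,\P(W\ge u),\qquad c=2e^3/9,
\]
for the Gaussian radius $W=a\|Z_d\|$. This is exactly the truncated-third-moment tail estimate for the norm of a Gaussian vector established, with the constant \eqref{eq:c=c_3} and uniformly in the dimension, in \cite{T2,pin98}; it is the same estimate that turned \eqref{eq:pin-compar} into \eqref{eq:ortho}, and it is what makes the cubic-moment method produce $c_3$ rather than a better constant. Choosing the near-optimal $t=t(u)$ supplied there yields the asserted bound, and the strict inequality in that Gaussian estimate (reflected already in the strict form of \eqref{eq:ortho}) gives the strict version of \eqref{eq:KO-compar}; the few degenerate configurations, such as all but one $a_i$ equal to $0$, are checked directly using $\P(\|Z_d\|\ge\sqrt d\,)\ge1/e$.

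I expect the main obstacle to be twofold. First, one must confirm that the truncated-cubic test functions are genuinely admissible for \eqref{eq:BC-compar,Z} in every dimension, i.e.\ that $f_t$ is bisubharmonic including the distributional contribution on $\{\|x\|=t\}$; this boundary term is where sign control would fail for less carefully chosen test functions. Second, and more essentially, the constant must be the dimension-free $c_3=2e^3/9$: the delicate part is that the one-parameter optimization of $\E(a\|Z_d\|-t)_+^3/(u-t)^3$ against $\P(a\|Z_d\|\ge u)$ produces the same value of $c$ for every $d$, matching the one-dimensional benchmark \eqref{eq:d=1}. Both points are precisely what the cited results in \cite{T2,pin98} secure, so the argument amounts to assembling them around \eqref{eq:BC-compar,Z}.
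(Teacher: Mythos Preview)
Your argument is essentially the paper's: feed the Baernstein--Culver comparison \eqref{eq:BC-compar,Z} with bisubharmonic radial test functions and then invoke the moment-to-tail machinery of \cite{T2,pin98} that carries the constant $c_3$. The paper packages this as two lemmas---Lemma~\ref{lem:1} shows $h(\|\cdot\|)$ is bisubharmonic for \emph{every} $h\in\cc$ via the convexity criterion \eqref{eq:iff}, and Lemma~\ref{lem:2} is the abstract tail extraction---whereas you verify bisubharmonicity only for the specific truncated cubics $f_t(x)=(\|x\|-t)_+^3$ by a direct $\De^2$ computation; since these are precisely the extremal elements of $\cc$ that drive $c_3$, this narrower check suffices and is a legitimate shortcut.

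One small caution on phrasing: the displayed optimization inequality $\inf_{0<t<u}\E(W-t)_+^3/(u-t)^3\le c_3\,\P(W\ge u)$ does \emph{not} hold for all $u>0$ by the cubic bound alone; in \cite{T2} the range $0\le u\le\mu_d$ below a threshold is handled separately by the trivial estimate $\P(\|S\|>u)\le1$ combined with a lower bound on $\P(\|Z_d\|\ge u)$. You have this ingredient (your remark on $\P(\|Z_d\|\ge\sqrt d\,)\ge1/e$), but it is a small-$u$ case split rather than a check on ``degenerate configurations'' of the $a_i$.
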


Our method is quite different from that of \cite{nayar-tkocz}. 
In view of \eqref{eq:BC-compar,Z}, Theorem~\ref{th:} is an immediate corollary of the following two lemmas. 

\begin{lemmaa}\label{lem:1}
For any function $h\in\cc$, the function $f\colon\R^d\to\R$ defined by the formula $f(x):=h(\|x\|)$ for $x\in\R^d$ is a 
spherically invariant bisubharmonic function. 
\end{lemmaa}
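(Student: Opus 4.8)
The plan is to use the definition of bisubharmonicity directly: since $f(x):=h(\|x\|)$ is manifestly spherically invariant, the whole content is to show that the distribution $\De^2 f$ is a nonnegative Radon measure. Because $h''$ is convex it is continuous, so $h\in C^2(\R)$; I would first mollify $h$ by an even smooth kernel $\phi_\ep$, producing $h_\ep\in C^\infty(\R)$ that are still even and have $h_\ep''=h''*\phi_\ep$ convex (a nonnegative mixture of translates of a convex function is convex), with $h_\ep$ and its first and second derivatives converging to those of $h$ locally uniformly. Since $f_\ep:=h_\ep(\|\cdot\|)\to f$ in the sense of distributions, $\De^2 f_\ep\to\De^2 f$, and a distributional limit of nonnegative measures is a nonnegative measure; hence it suffices to treat smooth even $h$ with $h''$ convex.

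For such $h$ the next step is to record that $f\in C^2(\R^d)$ (an even $C^2$ radial profile yields a function of $x$ that is $C^2$ at the origin) and that the radial bi-Laplacian is, for $r:=\|x\|>0$,
\begin{equation*}
	\De^2 f=h''''+2(d-1)\frac{h'''}{r}+(d-1)(d-3)\Big(\frac{h''}{r^2}-\frac{h'}{r^3}\Big),
\end{equation*}
all derivatives evaluated at $r$. A short Taylor expansion at $0$ shows that each quotient extends continuously to $r=0$, so $\De^2 f$ is a genuine continuous function carrying no atom at the origin; thus it is enough to prove $\De^2 f\ge0$ on $(0,\infty)$.

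The sign analysis rests on three consequences of ``$h$ even with $h''$ convex'': first, $h''''\ge0$; second, $h''$ is nondecreasing on $[0,\infty)$, so $h'''\ge0$ there and, moreover, $h'''$ is itself nondecreasing; third, since $h'(0)=0$, the quantity $q(r):=rh''(r)-h'(r)=\int_0^r s\,h'''(s)\,\dd s$ is nonnegative, i.e.\ $\frac{h''}{r^2}-\frac{h'}{r^3}=q(r)/r^3\ge0$. For $d=1$ and every $d\ge3$ one has $(d-1)(d-3)\ge0$, so all three summands in the displayed formula are nonnegative and we are done at once. The one remaining case, $d=2$, is the main obstacle, because there $(d-1)(d-3)=-1$ makes the last summand negative; to handle it I would use that $h'''$ is nondecreasing to estimate $q(r)=\int_0^r s\,h'''(s)\,\dd s\le h'''(r)\,r^2/2$, whence $q(r)/r^3\le h'''(r)/(2r)$, and this is comfortably absorbed by the term $2h'''/r$, leaving $\De^2 f\ge h''''+\tfrac32\,h'''/r\ge0$.

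Finally, undoing the mollification passes the inequality $\De^2 f\ge0$ to an arbitrary $h\in\cc$, which completes the proof. As an alternative to the distributional computation one could instead invoke the characterization \eqref{eq:iff} and verify that $t\mapsto\E h(\|y+U\sqrt t\,\|)$ is convex for each $y$; but the direct route above has the advantage of producing the explicit radial formula and isolating $d=2$ as the only delicate case.
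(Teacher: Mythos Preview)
Your argument is correct and genuinely different from the paper's. The paper never touches the radial bi-Laplacian formula; instead it invokes the characterization \eqref{eq:iff} directly (precisely the alternative you mention at the end). Writing $\|y+\vp U\sqrt t\|^2=(y\cdot U+\vp\sqrt t)^2+\big(\|y\|^2-(y\cdot U)^2\big)$ with an independent Rademacher $\vp$, the paper reduces the convexity of $t\mapsto\E h(\|y+U\sqrt t\|)$ to the convexity in $t$ of $\E g_{b,h}(\beta+\vp\sqrt t)$, where $g_{b,h}(u):=h(\sqrt{u^2+b})$; it then cites two lemmas from \cite{T2}: that $g_{b,h}\in\cc$ whenever $h\in\cc$ and $b\ge0$, and that for any $g\in\cc$ the map $t\mapsto\E g(\beta+\vp\sqrt t)$ is convex. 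This is cleaner in that it works uniformly in $d$ with no smoothing and no case split, but it leans on external lemmas. Your route is more self-contained and yields the explicit pointwise formula for $\De^2 f$; the price is the mollification step and the separate treatment of $d=2$, where your estimate $q(r)\le h'''(r)\,r^2/2$ (from the monotonicity of $h'''$) is exactly the right tool. The only place to be a touch more careful in writing is the origin: rather than a Taylor-expansion remark, it is quicker to note that an even $C^\infty$ function of $r$ is a $C^\infty$ function of $r^2$ (Whitney), so after mollification $f_\ep$ is genuinely $C^\infty$ on $\R^d$ and $\De^2 f_\ep$ is an honest continuous function, with no singular contribution at $0$ to worry about.
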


\begin{lemmaa}\label{lem:2}
Let $\xi$ be any nonnegative r.v.\ such that 
\begin{equation}\label{eq:E h(xi)<}
	\E h(\xi)\le\E h(\|Z_d\|)\quad\text{for all}\quad h\in\cc. 
\end{equation}
Then 
\begin{equation}\label{eq:<c_3...}
\P(\xi>u)<c_3\P(\|Z_d\|>u) 	
\end{equation}
for all real $u$, with $c_3$ defined in \eqref{eq:c=c_3}. 
\end{lemmaa}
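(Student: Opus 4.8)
The plan is to extract the tail bound \eqref{eq:<c_3...} from the generalized-moment hypothesis \eqref{eq:E h(xi)<} by testing the latter against suitable cubic functions in $\cc$, and then to reduce the resulting estimate to a one-dimensional optimization involving the chi distribution of $\|Z_d\|$. First I would dispose of the trivial range $u\le 0$: there $\P(\xi>u)\le 1<c_3=c_3\,\P(\|Z_d\|>u)$, since $\|Z_d\|>u$ almost surely; so I may assume $u>0$. The key observation is that for each fixed $s\ge 0$ the function $x\mapsto(|x|-s)_+^3$ belongs to $\cc$: it is even and twice differentiable, and its second derivative $6(|x|-s)_+$ is convex, being $6$ times the maximum of the convex functions $|x|-s$ and $0$. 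Feeding this function into \eqref{eq:E h(xi)<} and using $\xi\ge 0$ together with the nonnegativity of $\|Z_d\|$ gives the family of cubic moment comparisons $\E(\xi-s)_+^3\le\E(\|Z_d\|-s)_+^3$ for all $s\ge 0$. The elementary Markov-type bound $\P(\xi>u)=\P\big((\xi-s)_+>u-s\big)\le(u-s)^{-3}\,\E(\xi-s)_+^3$, valid for every $s\in[0,u)$, then yields
\begin{equation*}
\P(\xi>u)\le\inf_{0\le s<u}\frac{\E(\|Z_d\|-s)_+^3}{(u-s)^3}.
\end{equation*}

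It thus remains to establish the purely distributional inequality
\begin{equation*}
\inf_{0\le s<u}\frac{\E(\|Z_d\|-s)_+^3}{(u-s)^3}<c_3\,\P(\|Z_d\|>u),
\end{equation*}
and this is where I expect the real work to lie. Using $\E(\|Z_d\|-s)_+^3=\int_s^\infty 3(x-s)^2\,\P(\|Z_d\|>x)\,dx$, the left-hand side becomes a ratio that I would optimize over the single parameter $s$. The sharp constant is $c_3=2e^3/9=3!\,e^3/3^3=1/\P(\mathrm{Poisson}(3)=3)$; the exponent $3$ and the value $e^3$ enter through the maximization of a factor of the type $t^3e^{-t}$, maximal at $t=3$, coming from the near-exponential decay of the chi tail $\P(\|Z_d\|>x)$ weighted against the cubic. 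This is precisely the computation already carried out, in the equivalent setting of an orthoprojector Gram matrix, to pass from the generalized-moment comparison \eqref{eq:pin-compar} to the tail comparison \eqref{eq:ortho} with the same constant $c_3$; I would follow that route, as in \cite[Theorem~2.4]{T2} and \cite{pin98}, the only change being that the hypothesis is now stated directly as $\cc$-moment domination rather than produced by Rademacher sums, which is harmless since that argument uses nothing beyond the moment domination.

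The cubic bound above is weak for small $u$, where the optimizing ratio in fact blows up as $u\downarrow 0$; there the trivial estimate $\P(\xi>u)\le 1$ suffices, because $\P(\|Z_d\|>u)>1/c_3$ on that range. These two regimes together cover all $u>0$, and the precise matching of the threshold is part of the sharp computation cited. Finally, the strict inequality in \eqref{eq:<c_3...} is inherited automatically: the majorant $(|x|-s)_+^3/(u-s)^3$ strictly exceeds the indicator of $\{|x|>u\}$ on a set of positive $\|Z_d\|$-probability, so the chain of $\le$'s cannot be a chain of equalities. The main obstacle is thus the sharp one-dimensional optimization yielding the displayed distributional inequality with the exact constant $2e^3/9$.
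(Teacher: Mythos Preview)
Your proposal is correct and follows essentially the same route as the paper, which likewise reduces Lemma~\ref{lem:2} to the machinery of \cite[Theorem~2.4]{T2} and \cite{pin98} after observing that only the $\cc$-moment domination \eqref{eq:E h(xi)<} is used there, not any special structure of the Rademacher sum. The paper's outline distinguishes three ranges of $u$ rather than two (with an intermediate range $\sqrt d\le u\le\mu_d$ handled as in \cite[Lemma~3.6]{T2}), but this refinement is part of the sharp computation you already defer to.
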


\begin{proof}[Proof of Lemma~\ref{lem:1}]
Let $U$ be as in \eqref{eq:iff} and then let $\vp$ be a Rademacher r.v.\ independent of $U$. 
For all $t\in(0,\infty)$ and $y\in\R^d$
\begin{equation}\label{eq:lem1}
	\E f(y+U\sqrt t)=\E f(y+\vp U\sqrt t)=\E h(\|y+\vp U\sqrt t\|)=\E\E_U g_{b_U,h}(\be_U+\vp\sqrt t), 
\end{equation}
where 
$\E_U$ denotes the conditional expectation given $U$, $g_{b,h}(u):=h\big(\sqrt{u^2+b}\,\big)$ for $b\in[0,\infty)$ and $u\in\R$, $\be_U:=y\cdot U$, and $b_U:=\|y\|^2-(y\cdot U)^2\ge0$, so that the r.v.\ $\vp$ is independent of the pair $(b_U,\be_U)$, which latter is a function of $U$. 
By \cite[Lemma~3.1]{T2}, $g_{b,h}\in\cc$ for each $b\in[0,\infty)$. 
Hence, by \cite[Lemma~3.1]{utev-extr} or \cite[Proposition~A.1]{T2}, $\E_U g_{b_U,h}(\be_U+\vp\sqrt t)$ is convex in $t\in(0,\infty)$.  
So, in view of \eqref{eq:lem1}, $\E f(y+U\sqrt t)$ is convex in $t\in(0,\infty)$. 
Now 
it follows by \eqref{eq:iff} that the function $f$ is indeed bisubharmonic. 
That $f$ is spherically invariant is trivial.  
\end{proof}

\begin{proof}[Proof of Lemma~\ref{lem:2}]
Taken almost verbatim, the proof of Theorem~2.4 in \cite{T2} (based on Theorem~2.3 in \cite{T2}) can also serve as a proof of Lemma~\ref{lem:2}. Indeed, no properties of the r.v.\ $\vp\Pi\vp^T$ were used in the proof of \cite[Theorem~2.4]{T2} except that this nonnegative r.v.\ satisfies the inequality in \cite[Theorem~2.3]{T2} with $A=\Pi$ and $\xi=Z_n$, which can then be written as \eqref{eq:E h(xi)<} with $\xi=\sqrt{\vp\Pi\vp^T}$ and $d$ equal the rank of $\Pi$. (Note here a typo in \cite{T2}: in place of ``Theorem~2.3'' in line 7- on page 363 there, it should be ``Theorem~2.4''.) 

Instead of following the entire proof of \cite[Theorem~2.4]{T2}, one can alternatively reason as follows. 
Let $\xi$ be any nonnegative r.v.\ such that \eqref{eq:E h(xi)<} holds. Then \cite[Lemma~3.5]{T2} holds with $\xi^2$ in place of $\vpp\Pi\vpp^T$. So, in view of \cite[formula~(3.11)]{T2} and \cite[formula~(22) in Theorem~3.11]{pin98}, inequality \eqref{eq:<c_3...} holds for $u\ge\mu_r$, with $r:=d$ and $\mu_r$ defined on page~362 in \cite{T2}. 
The cases $r^{1/2}\le u\le\mu_r$ and $0\le u\le r^{1/2}$ are considered as 
was done at the end of the proof of \cite[Lemma~3.6]{T2}, starting at the middle of page~365 in \cite{T2}. The case $u<0$ is trivial. 
\end{proof}

An immediate application of Theorem~\ref{th:} is  

\begin{cor}\label{cor:}
Let $X_1,\dots,X_n$ be any independent spherically invariant random vectors in $\R^d$, which are also independent of the Gaussian random vector $Z_d$. Then 
\begin{equation}\label{eq:sph-inv}
	\P(\|X_1+\dots+X_n\|>u)<\frac{2e^3}9\,\P\big(\sqrt{\|X_1\|^2+\dots+\|X_n\|^2}\;\|Z_d\|>u\big) 
\end{equation}
for all real $u$. 
\end{cor}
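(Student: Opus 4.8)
The plan is to deduce Corollary~\ref{cor:} from Theorem~\ref{th:} by conditioning on the random norms $R_i:=\|X_i\|$ and invoking the polar decomposition of a spherically invariant random vector.

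First I would use the standard fact that a random vector $X$ in $\R^d$ is spherically invariant if and only if, in distribution, $X=\|X\|\,U$ with $U$ uniformly distributed on the unit sphere $S^{d-1}$ and independent of $\|X\|$. Applying this to each $X_i$ and using that $X_1,\dots,X_n$ are independent, I can represent the whole family as $X_i=R_iU_i$, where $U_1,\dots,U_n$ are independent copies of $U$ --- hence exactly the vectors appearing in Theorem~\ref{th:} --- the array $(U_1,\dots,U_n)$ is independent of $R:=(R_1,\dots,R_n)$, and $Z_d$ is independent of both by hypothesis. Then $X_1+\dots+X_n=R_1U_1+\dots+R_nU_n$ and $\|X_i\|=R_i$.

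Next I would condition on $R=r=(r_1,\dots,r_n)$. Since the directions are independent of the radii, the conditional law of $X_1+\dots+X_n$ given $R=r$ is that of $r_1U_1+\dots+r_nU_n$, while the law of $Z_d$ is unaffected. The strict version of Theorem~\ref{th:}, applied with $a_i=r_i$, then gives for every real $u$ and almost every $r$
\[
\P\big(\|X_1+\dots+X_n\|>u\mid R=r\big)<c_3\,\P\Big(\sqrt{(r_1^2+\dots+r_n^2)/d}\;\|Z_d\|>u\Big).
\]
Taking expectations in $R$ --- the tower property reassembles the left-hand side, and independence of $Z_d$ from $R$ identifies the conditional law of $Z_d$ with its unconditional one as in Theorem~\ref{th:} --- yields
\[
\P\big(\|X_1+\dots+X_n\|>u\big)<c_3\,\P\Big(\sqrt{(\|X_1\|^2+\dots+\|X_n\|^2)/d}\;\|Z_d\|>u\Big).
\]
Finally, since $d\ge1$ one has $\sqrt{(\|X_1\|^2+\dots+\|X_n\|^2)/d}\le\sqrt{\|X_1\|^2+\dots+\|X_n\|^2}$ pointwise, so enlarging the factor can only enlarge the probability; this turns the last display into \eqref{eq:sph-inv}.

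The only conceptual step is the reduction itself: after conditioning on the norms, an arbitrary independent spherically invariant configuration becomes precisely the uniform-on-the-sphere configuration governed by Theorem~\ref{th:}. The remaining points are routine bookkeeping, and I do not expect a genuine obstacle; the one place warranting a little care is the preservation of the strict inequality under the averaging over $R$, which follows from the strict form of Theorem~\ref{th:} because the conditional gap is strictly positive on the positive-probability event $\{R\neq\0\}$, the degenerate case $X_1=\dots=X_n=\0$ a.s.\ being excluded.
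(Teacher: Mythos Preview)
Your argument is correct and mirrors the paper's one-line proof, which likewise derives the corollary from Theorem~\ref{th:} by conditioning on $\|X_1\|,\dots,\|X_n\|$ and using that the conditional distribution of each $X_i$ given $\|X_i\|=a_i$ is that of $a_iU_i$. You are in fact more careful than the paper: you explicitly bridge the $1/\sqrt d$ discrepancy between the $a$ of Theorem~\ref{th:} and the factor appearing in \eqref{eq:sph-inv} via monotonicity, and you address the preservation of the strict inequality under averaging.
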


This corollary follows from Theorem~\ref{th:} by the conditioning on $\|X_1\|,\dots,\|X_n\|$, because for each $i=1,\dots,n$ the conditional distribution of the spherically invariant random vector $X_i$ given $\|X_i\|=a_i$ is the distribution of $a_iU_i$. 

In the case when the independent spherically invariant random vectors $X_1,\dots,X_n$ are bounded almost surely by positive real numbers $b_1,\dots,b_n$, respectively, one can obviously replace $\sqrt{\|X_1\|^2+\dots+\|X_n\|^2}$ in the bound in \eqref{eq:sph-inv} by $\sqrt{b_1^2+\dots+b_n^2}$. The resulting bound, but with the constant factor $397$ in place of $\frac{2e^3}9=4.46\dots$, was obtained in \cite{nayar-tkocz}. 

Similarly to the extension \eqref{eq:sph-inv} of inequality \eqref{eq:KO-compar}, one can extend \eqref{eq:BC-compar,Z} as follows: 
\begin{equation}\label{eq:sph-inv,E}
	\E f(X_1+\dots+X_n)\le\E f\big(\sqrt{\|X_1\|^2+\dots+\|X_n\|^2}\;Z_d\big) 
\end{equation}
for any spherically invariant bisubharmonic function $f$, 
where $X_1,\dots,X_n$ are as in Corollary~\ref{cor:}. 

A related result was obtained in \cite{kwapien_best-khin-rot-inv}: if $X_1,\dots,X_n$ are independent identically distributed spherically invariant random vectors in $\R^d$ such that $\E h(\|X_1\|^2)\le\E h(\|Z_d\|^2)$ for all nonnegative convex functions $h\colon\R\to\R$, then 
\begin{equation}\label{eq:kwapien}
	\E\|a_1X_1+\dots+a_nX_n\|^p\le\E\|aZ_d\sqrt d\|^p 
\end{equation}
for real $p\ge3$, where $a_1,\dots,a_n,a$ are as in \eqref{eq:BC-compar,Z}--\eqref{eq:a}.

\bibliographystyle{imsart-number}
\bibliography{C:/Users/ipinelis/Dropbox/mtu/bib_files/citations12.13.12}

\end{document}